\renewcommand{\epsilon}{\varepsilon}
\renewcommand{\subset}{\subseteq}
\renewcommand{\supset}{\supseteq}
\theoremstyle{definition}
\newtheorem{definition}{Definition}[section]
\newtheorem{theorem}[definition]{Theorem}
\newtheorem{lemma}[definition]{Lemma}
\newtheorem{corollary}[definition]{Corollary}
\newtheorem{example}[definition]{Example}
\newtheorem{remark}[definition]{Remark}
\newtheorem{proposition}[definition]{Proposition}
\titleformat{\section}{\normalfont\bfseries}
{\thesection.}{0.5em}{}
\newcommand\blfootnote[1]{%
  \begingroup
  \renewcommand\thefootnote{}\footnote{#1}%
  \addtocounter{footnote}{-1}%
  \endgroup
}
\title{\textsc{The Normality of Products Under Perfect Preimages}}
\author{\textsc{Lucas D. O'Brien}}
\date{}
\begin{document}

\maketitle

\begin{abstract}
    A proof of the following theorem is given, answering an open problem attributed to Kunen: suppose that $T$ is compact and that $Y$ is the image of $X$ under a perfect map, $X$ is normal, and $Y\times T$ is normal. Then $X \times T$ is normal.
\end{abstract}

\blfootnote{2020 \textit{Mathematics Subject Classification.} Primary 54B10, 54C10, 54D15}
\blfootnote{\textit{Key words and phrases.} Normality of products, perfect map, compact.}


\section{Introduction}

In \cite{Rudin73}, Mary Ellen Rudin proved that given a compact space $T$, a normal space $X$, and a space $Y$ that is the image of $X$ under a closed map, that $X \times T$ is normal only if $Y \times T$ is normal. Rudin's proof relied on a characterization of normality of products with a compact factor which appeared as Theorem 3 in \cite{Rudin73}, and is included as Theorem \ref{characterizationtheorem} in this paper. This characterization loosely says that the normality of the product $X \times T$ is equivalent to certain open covers of $X$ having locally finite open refinements. Rudin called these open covers $X$-\textit{separations}; we follow the convention of \cite{Kunen84} and call them $\mathscr{B}$-\textit{coverings} (see Definition \ref{covering}). In view of Theorem \ref{characterizationtheorem}, there is correlation between paracompactness and the normality of a product with a compact factor. Given a compact space $T$, Morita defines a space $X$ to be $T$-\textit{paracompact} provided $X \times T$ is normal \cite{Morita61}. 

We may intuitively expect the property of $T$-paracompactness to be preserved in a similar way to paracompactness under mappings. Indeed, Rudin's theorem shows that $T$-paracompactness is preserved under images of closed maps. The question of whether $T$-paracompactness is preserved under preimages of perfect maps is attributed to Kunen and appeared as an open problem in \cite{Kunen84} and \cite{Rudin75}; this paper shows that this is indeed the case. 

By the Tietze extension theorem, any continuous map from a closed subspace of a normal space $X$ to $\mathbb{R}$ can be extended to a continuous map of all of $X$ into $\mathbb{R}$. Therefore, the study of the normality of products has applications to the problem of the extendability of continuous functions from subsets of product spaces. For more details, see \cite{Reed80}.

\section{ Characterization of Normality in Products with a Compact Factor}

In the following, $T$ denotes a compact Hausdorff space and $\mathscr{B}$ denotes a base of $T$ which is closed with respect to finite unions and intersections. A \textit{normal space} refers to a topological space which is both $\text{T}_4$ and $\text{T}_2$. 

\begin{definition}
    Let $\mathcal{A}$ be a collection of sets, and let $\mathcal{B}$ be a collection of ordered pairs of sets. We will say a map $G: \mathcal{B} \to \mathcal{A}$ is \textit{intersection preserving} provided $G(A,B) \cap G(A',B') = G(A \cap A', B\cap B')$ for all $(A,B), (A',B') \in \mathcal{B}$. 
\end{definition}

\begin{remark}\label{intersectionpropertyremark}
    Suppose $G: \mathcal{B} \to \mathcal{A}$ is intersection preserving. Then, for $(A,B), (A',B') \in \mathcal{B}$ such that $A \subset A'$ and $B \subset B'$, we have $G(A,B) \subset G(A',B')$. 
\end{remark}

\begin{definition}
    \cite{Kunen84}\label{covering} Fix a base $\mathscr{B}$ of $T$, and define $\mathcal{S}_{\mathscr{B}} = \{(B,D) \in \mathscr{B}^2| \  \overline{B} \cap \overline{D} = \emptyset\}$. An open cover $\mathscr{G}$ of a space $X$ is called a $\mathscr{B}$-\textit{covering of} $X$ if there exists an intersection preserving surjection $G: \mathcal{S}_{\mathscr{B}} \to \mathscr{G}$. We call $G$ \textit{the associated map} to the $\mathscr{B}$-covering.
\end{definition}

Let $X$ be a space, and let $G: \mathcal{S}_{\mathscr{B}} \to \mathscr{P}(X)$ be an intersection preserving map. Then, the set $\mathscr{G} = \{G(B,D) \ | \ (B,D) \in \mathcal{S}_{\mathscr{B}}\}$ is a $\mathscr{B}$-covering of $X$ if and only if $\mathscr{G}$ is an open cover of $X$.

\begin{definition}\label{precovering}
    An open cover $\mathscr{G}'$ of a space $X$ is called a \textit{pre}-$\mathscr{B}$-\textit{covering of} $X$ if there exists an intersection preserving surjection $G : \mathscr{B}^2 \to \mathscr{G}'$ and $\mathscr{G} = \{G(B,D) \ | \  (B,D) \in \mathcal{S}_{\mathscr{B}}\} \subset \mathscr{G}'$ is a $\mathscr{B}$-covering of $X$. We call $\mathscr{G}$ the $\mathscr{B}$-\textit{covering associated with} $\mathscr{G}'$. 
\end{definition}

\begin{example} \label{coveringexample}
    Let $X$ be a normal space, and let $K,L \subset X \times T$ be closed, disjoint sets. For $B,D \in \mathscr{B}$, define

    \[
    G(B,D) = \{x \in X  \ | \  K_x \subset B \text{ and }L_x \subset D\},
    \]

    where $K_x  = \{t \in T \ | \ (x,t) \in K\}$ and $L_x =\{t \in T \ | \ (x,t) \in L\}$. Then, the set $\mathscr{G}' = \{G(B,D) \ | \ (B,D) \in \mathscr{B}^2\}$ is a pre-$\mathscr{B}$-covering of $X$.

    Indeed, by definition, $G: \mathscr{B}^2 \to \mathscr{G}, (B,D)\mapsto G(B,D)$ is surjective, and it is easy to check that $G$ is intersection preserving. Thus, it remains to show that $\mathscr{G} = \{G(B,D) \ | \  (B,D) \in \mathcal{S}_{\mathscr{B}}\}$ covers $X$ and $G(B,D)$ is open for each $(B,D) \in \mathscr{B}^2$. Let $x \in X$. Since $K$ and $L$ are disjoint, so are $K_x$ and $L_x$. Furthermore, notice that $K_x = \pi_T(K \cap (\{x\}\times T))$ and $L_x = \pi_T(L \cap (\{x\}\times T))$, where $\pi_T: X \times T \to T$ is the projection map. Since $K \cap (\{x\}\times T)$ and $L \cap (\{x\}\times T)$ are compact, $K_x$ and $L_x$ are closed. Thus, there exist $B,D \in \mathscr{B}$ such that $K_x \subset B$, $L_x \subset D$ and $\overline{B} \cap \overline{D} = \emptyset$ since $\mathscr{B}$ is closed with respect to finite unions. Then, $x \in G(B,D)$. Notice that for each $(B,D) \in \mathscr{B}^2$, we may write

    \[
    G(B,D) = X \setminus \pi_X((K\setminus (X \times B))\cup (L \setminus (X\times D))),
    \]

    where $\pi_X: X \times T \to X$ is the projection map. Since $T$ is compact, $\pi_X$ is a closed map, and therefore $G(B,D)$ is open. 
\end{example}

The next lemma follows from the techniques used in \cite{Kunen84}. 

\begin{lemma} \label{precoveringlemma}
    Let $X$ be a normal space. For every $\mathscr{B}$-covering $\mathscr{G}$ of $X$, there exists a pre-$\mathscr{B}$-covering $\mathscr{H}'$ of $X$ such that the $\mathscr{B}$-covering associated with $\mathscr{H}'$ is an open cover refinement of $\mathscr{G}$.
\end{lemma}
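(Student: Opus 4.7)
The natural approach is to extend the associated map of $\mathscr{G}$ from $\mathcal{S}_\mathscr{B}$ to all of $\mathscr{B}^2$ in a canonical, ``maximal'' fashion. Writing $G\colon\mathcal{S}_\mathscr{B}\to\mathscr{G}$ for the associated map of the $\mathscr{B}$-covering $\mathscr{G}$, I will define
\[
H(B,D):=\bigcup\bigl\{\,G(B',D') : (B',D')\in\mathcal{S}_\mathscr{B},\ B'\subset B,\ D'\subset D\,\bigr\}
\]
for each $(B,D)\in\mathscr{B}^2$ and set $\mathscr{H}':=\{H(B,D):(B,D)\in\mathscr{B}^2\}$. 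The aim is then to verify that $H$ exhibits $\mathscr{H}'$ as a pre-$\mathscr{B}$-covering whose associated $\mathscr{B}$-covering is exactly $\mathscr{G}$, so that the refinement condition is automatic.

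A preliminary observation is that $\mathcal{S}_\mathscr{B}$ is closed under coordinate-wise intersection, because $\overline{B\cap B'}\cap\overline{D\cap D'}\subset\overline{B}\cap\overline{D}=\emptyset$ whenever $(B,D),(B',D')\in\mathcal{S}_\mathscr{B}$. Applying the monotonicity of $G$ recorded in Remark~\ref{intersectionpropertyremark}, I will then show that $H(B,D)=G(B,D)$ for $(B,D)\in\mathcal{S}_\mathscr{B}$, since $(B,D)$ is itself the maximal admissible pair in the defining union; this immediately yields $\mathscr{G}\subset\mathscr{H}'$, so $\mathscr{H}'$ is an open cover of $X$, and identifies the candidate associated $\mathscr{B}$-covering of $\mathscr{H}'$ as $\mathscr{G}$. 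Each $H(B,D)$ is open as a union of open sets, and the surjection $H\colon\mathscr{B}^2\to\mathscr{H}'$ is automatic from the definition of $\mathscr{H}'$.

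The main verification, and really the only nontrivial step, is that $H$ is intersection preserving on all of $\mathscr{B}^2$. The idea is to expand $H(B_1,D_1)\cap H(B_2,D_2)$ as a double union over admissible indices $(B_1',D_1')$ and $(B_2',D_2')$, apply the intersection property of $G$ termwise to rewrite each $G(B_1',D_1')\cap G(B_2',D_2')$ as $G(B_1'\cap B_2',D_1'\cap D_2')$, and then use the preliminary observation to conclude that $(B_1'\cap B_2',D_1'\cap D_2')$ is itself an admissible index for $H(B_1\cap B_2,D_1\cap D_2)$; the reverse containment follows at once by taking the diagonal $(B_1',D_1')=(B_2',D_2')$. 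No further machinery is required. In particular, normality of $X$ does not seem to enter the construction itself, and appears to function here mainly as a standing hypothesis of the paper.
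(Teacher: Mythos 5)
Your proof is correct, and it takes a genuinely different and noticeably more economical route than the paper's. The paper does not extend $G$ formally: it builds the disjoint closed sets $K=(X\times T)\setminus\bigcup\{G(B,D)\times(T\setminus\overline{B})\mid (B,D)\in\mathcal{S}_{\mathscr{B}}\}$ and the analogous $L$, feeds them into the construction of Example~\ref{coveringexample} to get $H(B,D)=\{x\mid K_x\subset B,\ L_x\subset D\}$, and then uses a finite-subcover argument in the compact factor $T$ to verify $H(B,D)\subset G(B,D)$ on $\mathcal{S}_{\mathscr{B}}$. Your extension $H(B,D)=\bigcup\{G(B',D') : (B',D')\in\mathcal{S}_{\mathscr{B}},\ B'\subset B,\ D'\subset D\}$ bypasses all of that, and the verifications go through: $\mathcal{S}_{\mathscr{B}}$ is closed under coordinatewise intersection; $H=G$ on $\mathcal{S}_{\mathscr{B}}$ by Remark~\ref{intersectionpropertyremark} (so the associated $\mathscr{B}$-covering of $\mathscr{H}'$ is $\mathscr{G}$ itself and the refinement is the trivial one, which is all the lemma and its use in Theorem~\ref{the_theorem} require); and the double-union computation, together with the diagonal trick for the reverse inclusion, gives intersection preservation on all of $\mathscr{B}^2$ --- even the degenerate case where an index set is empty is consistent, since then one of the two factors is already empty. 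What the paper's construction buys is a genuine shrinking $H(B,D)\subset G(B,D)$ realized by an explicit pair of disjoint closed sets in $X\times T$, mirroring the $K_B$, $L_D$ machinery reused in the main theorem; what yours buys is the observation that the reduction to pre-$\mathscr{B}$-coverings is purely order-theoretic, using neither the compactness of $T$ nor (as you note) the normality of $X$.
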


\begin{proof}
    Let $\mathscr{G}$ be a $\mathscr{B}$-covering of $X$, and let $G: \mathcal{S}_{\mathscr{B}} \to \mathscr{G}$ be the associated map. Define 

    \[
    K = (X \times T) \setminus \bigcup\{G(B,D) \times (T \setminus \overline{B}) \ | \ (B,D) \in \mathcal{S}_{\mathscr{B}}\}
    \]

    and 

    \[
    L = (X \times T) \setminus \bigcup\{G(B,D) \times (T \setminus \overline{D}) \ | \  (B,D) \in \mathcal{S}_{\mathscr{B}}\}.
    \]

    Evidently $K$ and $L$ are closed, we will show they are disjoint. Indeed, let $(x,t) \in X \times T$. Since $\mathscr{G}$ covers $X$, there exists $(B,D) \in \mathcal{S}_{\mathscr{B}}$ such that $x \in G(B,D)$. We must have $t \notin \overline{B}$ or $t \notin \overline{D}$, and therefore either $(x,t) \in G(B,D) \times (T \setminus \overline{B})$ or $(x,t) \in G(B,D) \times (T \setminus \overline{D})$, so by the definition of $K$ and $L$, we must have either $(x,t) \in (X \times T) \setminus K$ or $(x,t) \in (X\times T) \setminus L$. Therefore, $K$ and $L$ are disjoint. 

    For $B,D \in \mathscr{B}$, define 

    \[
    H(B,D) = \{x \in X  \ | \  K_x \subset B \text{ and } L_x \subset D\},
    \]

    where $K_x  = \{t\in T \ | \  (x,t) \in K\}$ and $L_x = \{t \in T| (x,t) \in L\}$. As in Example \ref{coveringexample}, $\mathscr{H}' = \{H(B,D) \ |\  (B,D) \in \mathscr{B}^2\}$ is a pre-$\mathscr{B}$-covering of $X$. Therefore, to show that the $\mathscr{B}$-covering associated with $\mathscr{H}'$ refines $\mathscr{G}$, it remains to show that $H(B,D) \subset G(B,D)$ for all $(B,D) \in \mathcal{S}_{\mathscr{B}}$.

    Let $(B,D) \in \mathcal{S}_{\mathscr{B}}$, and let $x \in H(B,D)$. Then, $K_x \subset B$ and $L_x \subset D$. By definition of $K$ and $L$, we have

    \[
    \{x\} \times (T \setminus B) \subset \bigcup\{G(B',D')\times (T \setminus \overline{B'}) \ | \  B',D' \in \mathscr{B}, \overline{B'} \cap \overline{D'} = \emptyset\},
    \]

    and

    \[
    \{x\} \times (T \setminus D) \subset \bigcup\{G(B'',D'')\times (T \setminus \overline{D''}) \ | \  B'',D'' \in \mathscr{B}, \overline{B''} \cap \overline{D''} = \emptyset\}.
    \]

    Since $\{x\} \times (T\setminus B)$ is compact, it is covered by finitely many elements $\{G(B'_i, D'_i) \times (T \setminus \overline{B_i'})| 1 \leq i \leq n\}$. Therefore, we have $T - B \subset \bigcup_{i=1}^n (T \setminus \overline{B_i'})$ and $x \in \bigcap_{i=1}^n G(B_i, D_i)$. Similarly, we may find a finite family $\{(B''_1, D''_1), \dots, (B''_m, D''_m)\}$ such that $T- D \subset \bigcup_{i=1}^m (T \setminus \overline{D_i''})$ and $x \in \bigcap_{i=1}^m G(B''_i, D''_i)$. Therefore, we have $B \subset \bigcap_{i=1}^n B'_i$, $D \subset \bigcap_{i=1}^m D''_i$, and 
    
    \[ x \in G(\bigcap_{i=1}^n B'_i \cap \bigcap_{i=1}^m B''_i , \bigcap_{i=1}^n D'_i \cap \bigcap_{i=1}^m D''_i) \subset G(B,D).\]

    Thus, we have that $H(B,D) \subset G(B,D)$ for all $(B,D) \in \mathcal{S}_{\mathscr{B}}$. Therefore, the $\mathscr{B}$-covering associated with $\mathscr{H}'$ refines $\mathscr{G}$. 
\end{proof}

    The proof of Theorem \ref{the_theorem} relies on the following characterization of normality of products with a compact factor, which allows us to work with $\mathscr{B}$-coverings instead of disjoint closed sets. The proof of this particular statement of Rudin's Lemma can be found in \cite{Kunen84}. 

\begin{theorem}[Rudin's Lemma]
    \label{characterizationtheorem}\cites{Kunen84, Rudin73, Beslagic91} The product space $X \times T$ is normal if and only if $X$ is normal and every $\mathscr{B}$-covering of $X$ has a locally finite open refinement.
\end{theorem}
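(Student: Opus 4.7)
The plan is to prove the two implications separately; the reverse direction (that the refinement property together with normality of $X$ yields normality of $X \times T$) is the cleaner half, while the forward direction is where the main technical difficulty lies.

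For the reverse direction, suppose $X$ is normal and every $\mathscr{B}$-covering of $X$ has a locally finite open refinement. Given disjoint closed $K, L \subset X \times T$, apply Example \ref{coveringexample} to produce a pre-$\mathscr{B}$-covering of $X$; its associated $\mathscr{B}$-covering $\mathscr{G}$ (with map $G$) admits a locally finite open refinement $\mathscr{R}$ by hypothesis. For each $R \in \mathscr{R}$, pick $(B_R, D_R) \in \mathcal{S}_{\mathscr{B}}$ with $R \subset G(B_R, D_R)$, so that $K_x \subset B_R$ and $L_x \subset D_R$ for every $x \in R$. Using normality of $X$ and the local finiteness of $\mathscr{R}$, the shrinking lemma furnishes a closed cover $\{F_R\}_{R \in \mathscr{R}}$ with $F_R \subset R$ and $\bigcup_R F_R = X$. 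Define
\[
U = (X \times T) \setminus \bigcup_{R \in \mathscr{R}} F_R \times (T \setminus B_R), \qquad V = (X \times T) \setminus \bigcup_{R \in \mathscr{R}} F_R \times (T \setminus D_R).
\]
Local finiteness of $\mathscr{R}$ in $X$ transfers to local finiteness of $\{F_R \times (T \setminus B_R)\}$ and $\{F_R \times (T \setminus D_R)\}$ in $X \times T$, so the unions are closed and $U, V$ are open. The containment $K \subset U$ holds because $x \in F_R$ together with $(x,t) \in K$ forces $t \in K_x \subset B_R$; the containment $L \subset V$ is symmetric. Disjointness follows from $\overline{B_R} \cap \overline{D_R} = \emptyset$: any $(x,t)$ lies in some $F_R$, and then $t \in B_R$ places $(x,t)$ in $F_R \times (T \setminus D_R)$, excluding it from $V$, while $t \notin B_R$ places $(x,t)$ in $F_R \times (T \setminus B_R)$, excluding it from $U$.

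For the forward direction, $X$ is normal because $X \times \{t_0\}$ is a closed subspace of $X \times T$. Given a $\mathscr{B}$-covering $\mathscr{G}$ with map $G$, I would form the disjoint closed sets $K, L$ defined at the start of the proof of Lemma \ref{precoveringlemma} and apply normality of $X \times T$ to obtain disjoint open $U, V$ with $K \subset U$ and $L \subset V$. A natural candidate for the refining family is $\{S(B,D) : (B,D) \in \mathcal{S}_{\mathscr{B}}\}$, where $S(B,D) = \{x \in X : K_x \subset B,\ \overline{B} \subset U_x,\ L_x \subset D,\ \overline{D} \subset V_x\}$. Each $S(B,D)$ is open (via closedness of $\pi_X$, as in Example \ref{coveringexample}), the family covers $X$ (for each $x$ one can find $B \in \mathscr{B}$ with $K_x \subset B \subset \overline{B} \subset U_x$ using the normality of $T$ and closure of $\mathscr{B}$ under finite unions, and likewise for $D$), and it refines $\mathscr{G}$ because $S(B,D) \subset H(B,D) \subset G(B,D)$ with $H$ as defined in the proof of Lemma \ref{precoveringlemma}.

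The main obstacle is producing local finiteness of the refining family: the index set $\mathcal{S}_{\mathscr{B}}$ can be large and the construction of $S(B,D)$ does not enforce any finiteness condition at a point. I expect this to be overcome by a further shrinking argument that exploits the closure of $\mathscr{B}$ under finite unions and intersections, perhaps by re-applying Lemma \ref{precoveringlemma} to the open cover $\{S(B,D)\}$ and then passing to a carefully indexed closed refinement so that only finitely many pairs $(B,D)$ (gathered by finite intersections inside $\mathscr{B}$) can be active near any given $x$. This is the technical heart of the proof and is the part for which Kunen's exposition should be consulted.
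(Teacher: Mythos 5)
The paper does not actually prove this theorem --- it is imported from the literature (``The proof of this particular statement of Rudin's Lemma can be found in \cite{Kunen84}'') --- so there is no in-paper argument to compare against. Your reverse implication is correct and complete: starting from disjoint closed $K,L\subset X\times T$, the pre-$\mathscr{B}$-covering of Example \ref{coveringexample}, a locally finite open refinement $\mathscr{R}$, the shrinking lemma for locally finite open covers of a normal space, and the two complements $U,V$ of the locally finite closed unions $\bigcup_R F_R\times(T\setminus B_R)$ and $\bigcup_R F_R\times(T\setminus D_R)$ do separate $K$ from $L$; each verification ($K\subset U$, $L\subset V$, and disjointness via $\overline{B_R}\cap\overline{D_R}=\emptyset$) checks out.

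The forward implication, however, is genuinely incomplete, as you acknowledge. What you establish is that $\{S(B,D)\mid (B,D)\in\mathcal{S}_{\mathscr{B}}\}$ is an open cover of $X$ refining $\mathscr{G}$; but producing an open refinement is vacuous ($\mathscr{G}$ refines itself), and nothing in the definition of $S(B,D)$ forces even point-finiteness --- the family is again indexed by all of $\mathcal{S}_{\mathscr{B}}$ and inherits the same downward-directed intersection structure, so it is essentially another object of the same kind as $\mathscr{G}$. The proposed repair --- re-applying Lemma \ref{precoveringlemma} and then ``passing to a carefully indexed closed refinement'' --- cannot close this gap: Lemma \ref{precoveringlemma} only converts one $\mathscr{B}$-covering into a finer pre-$\mathscr{B}$-covering, so iterating it never leaves the class of $\mathscr{B}$-coverings and never produces local finiteness; you would be assuming exactly the conclusion you are trying to derive. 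Extracting a \emph{locally finite} refinement from the normality of $X\times T$ is the actual content of the hard half of Rudin's Lemma and requires a substantively different construction (Rudin's original argument in \cite{Rudin73}, or the treatments in \cite{Kunen84} and \cite{Beslagic91}). Note that the paper needs both directions: the reverse direction to reduce normality of $X\times T$ to finding refinements, and the forward direction applied to the normal product $Y\times T$ to obtain the locally finite refinement $\mathscr{V}$ of $\mathscr{H}$. So the half you have proved is the half that cannot stand alone.
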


\section{Proof of Theorem}

\begin{theorem}\label{the_theorem}
    Suppose that $T$ is compact and that $Y$ is the image of $X$ under a perfect map $p$, $X$ is normal, and $Y \times T$ is normal. Then, $X\times T$ is normal.
\end{theorem}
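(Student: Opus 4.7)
By Theorem~\ref{characterizationtheorem} applied to $X\times T$, and since $X$ is already normal, it suffices to show that every $\mathscr{B}$-covering of $X$ admits a locally finite open refinement. My plan is to translate such a covering to a $\mathscr{B}$-covering of $Y$, apply Theorem~\ref{characterizationtheorem} on the $Y$-side using the hypothesis that $Y\times T$ is normal, and then pull the resulting refinement back through the perfect map $p$.

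Fix a $\mathscr{B}$-covering $\mathscr{G}$ of $X$. First I would apply Lemma~\ref{precoveringlemma} to produce a pre-$\mathscr{B}$-covering of $X$ whose associated map $H\colon\mathscr{B}^2\to\mathscr{H}'$, by the proof of that lemma, has the explicit form $H(B,D)=\{x\in X:K_x\subset B,\ L_x\subset D\}$ for a specific pair of disjoint closed $K,L\subset X\times T$, and whose associated $\mathscr{B}$-covering $\mathscr{H}$ refines $\mathscr{G}$. In particular $H$ is intersection-preserving on all of $\mathscr{B}^2$, hence monotone by Remark~\ref{intersectionpropertyremark}. Next I would define
\[
V(B,D)\;=\;Y\setminus p(X\setminus H(B,D))\;=\;\{y\in Y:p^{-1}(y)\subset H(B,D)\}.
\]
Since $p$ is closed (perfect maps are closed), each $V(B,D)$ is open; the fiber description makes $V$ intersection-preserving; and by construction $p^{-1}(V(B,D))\subset H(B,D)$. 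If $\mathscr{V}:=\{V(B,D):(B,D)\in\mathcal{S}_\mathscr{B}\}$ is a $\mathscr{B}$-covering of $Y$, then Theorem~\ref{characterizationtheorem} produces a locally finite open refinement $\mathscr{W}$ of $\mathscr{V}$ in $Y$, and since preimages under continuous maps preserve local finiteness, $\{p^{-1}(W):W\in\mathscr{W}\}$ is then a locally finite open cover of $X$ refining $\mathscr{H}$, and hence refining $\mathscr{G}$.

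The hard part will be verifying that $\mathscr{V}$ covers $Y$. For fixed $y\in Y$, compactness of $p^{-1}(y)$ together with the covering property of $\mathscr{H}$ yields a finite family $(B_i,D_i)\in\mathcal{S}_\mathscr{B}$ with $p^{-1}(y)\subset\bigcup_iH(B_i,D_i)$, and monotonicity of $H$ promotes this to $p^{-1}(y)\subset H\bigl(\bigcup_iB_i,\bigcup_iD_i\bigr)$; however, the pair of unions generally fails to lie in $\mathcal{S}_\mathscr{B}$. Unpacking, $p^{-1}(y)\subset H(B,D)$ is equivalent to $\tilde K_y\subset B$ and $\tilde L_y\subset D$, where $\tilde K_y$ and $\tilde L_y$ are the compact projections to $T$ of $K\cap(p^{-1}(y)\times T)$ and $L\cap(p^{-1}(y)\times T)$. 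Since $T$ is compact Hausdorff and $\mathscr{B}$ is closed under finite unions, an appropriate $(B,D)\in\mathcal{S}_\mathscr{B}$ exists precisely when $\tilde K_y\cap\tilde L_y=\emptyset$, equivalently when $(p\times\mathrm{id}_T)(K)$ and $(p\times\mathrm{id}_T)(L)$ are disjoint in $Y\times T$. Securing this fiberwise disjointness is the heart of the argument, and I anticipate it will require either iterating Lemma~\ref{precoveringlemma} once more, or replacing $K,L$ by variants saturated with respect to the equivalence relation induced by $p$, so as to fully exploit the fact that $p$ is closed with compact fibers.
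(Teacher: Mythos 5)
Your overall strategy is exactly the paper's: reduce via Theorem~\ref{characterizationtheorem} and Lemma~\ref{precoveringlemma} to a pre-$\mathscr{B}$-covering of $X$, push it forward to $Y$ by the fiber map $V(B,D)=\{y: p^{-1}(y)\subset H(B,D)\}$, apply Theorem~\ref{characterizationtheorem} on the $Y$-side, and pull the locally finite refinement back through $p$. The bookkeeping you do carry out (openness of $V(B,D)$ via closedness of $p$, intersection preservation, $p^{-1}(V(B,D))\subset H(B,D)$, preservation of local finiteness under preimages) is all correct. But you have not proved the theorem: the step you flag as ``the hard part'' --- that $\{V(B,D):(B,D)\in\mathcal{S}_{\mathscr{B}}\}$ actually covers $Y$ --- is the entire content of the argument, and you leave it as an anticipation rather than a proof. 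Moreover, the naive version genuinely fails: disjointness of $K$ and $L$ in $X\times T$ only gives $K_x\cap L_x=\emptyset$ for each single $x$, whereas you need $\bigl(\bigcup_{x\in p^{-1}(y)}K_x\bigr)\cap\bigl(\bigcup_{x'\in p^{-1}(y)}L_{x'}\bigr)=\emptyset$, and nothing prevents $K_x$ from meeting $L_{x'}$ for two distinct points of the same fiber. So ``iterating Lemma~\ref{precoveringlemma} once more'' on the $X$-side would not help --- you would face the identical obstruction again.

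The paper's resolution, which is the idea missing from your proposal, is to abandon the sets $K,L\subset X\times T$ entirely once the pushforward is made and to rebuild the separating pair \emph{in $Y\times T$} out of the pushed-forward pre-covering itself: it sets $K=\bigcap_{B}\bigl(((Y\setminus H(B,T))\times T)\cup(Y\times\overline{B})\bigr)$ and symmetrically $L$, proves $K\cap L=\emptyset$ using only that $\mathscr{H}'=\{H(B,D):(B,D)\in\mathscr{B}^2\}$ covers $Y$ (which is where the full pre-covering indexed by $\mathscr{B}^2$, not just $\mathcal{S}_{\mathscr{B}}$, is essential --- the pair $(\bigcup_iB_i,\bigcup_iD_i)$ you obtain from compactness of the fiber need not lie in $\mathcal{S}_{\mathscr{B}}$, but it does lie in $\mathscr{B}^2$), and then uses a compactness argument on $T\setminus B$ to show that $K_y\subset B$ forces $y\in H(B,T)$, whence $y\in H(B,T)\cap H(T,D)=H(B,D)$ with $(B,D)\in\mathcal{S}_{\mathscr{B}}$. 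This is close in spirit to your second suggested fix (replacing $K,L$ by $p$-saturated variants), but it requires the specific construction above together with the nontrivial verification that membership in the intersected sets $K_y$ can be traded back for membership in some $H(B'',T)$ with $B''\subset B$; as written, your proposal has a genuine gap at precisely this point.
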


\begin{proof}

We may assume without loss of generality that $T$ is not a singleton. 

By Rudin's Lemma, it suffices to show that every $\mathscr{B}$-covering of $X$ has a locally finite open refinement. For any $\mathscr{B}$-covering $\mathscr{G}$ of $X$, there exists a pre-$\mathscr{B}$-covering whose associated $\mathscr{B}$-covering refines $\mathscr{G}$ by Lemma \ref{precoveringlemma}, so it is enough to show that for any pre-$\mathscr{B}$-covering of $X$, the associated $\mathscr{B}$-covering has a locally finite open refinement. 

Let $\mathscr{G}'$ be a pre-$\mathscr{B}$-covering of $X$, and let $G: \mathscr{B}^2 \to \mathscr{G}'$ be the map associated with $\mathscr{G}'$. We will show that $\mathscr{G} = \{G(B,D)| (B,D)\in \mathcal{S}_{\mathscr{B}}\}$ has a locally finite open refinement. For each $(B,D) \in \mathscr{B}^2$, define

\[
H(B,D) = \{y \in Y \ | \  p^{-1}\{y\} \subset G(B,D)\}.
\]

It is easy to check that $H$ is intersection preserving. Define ${\mathscr{H}' = \{H(B,D)| (B,D) \in \mathscr{B}^2\}}$; we claim that $\mathscr{H}'$ is an open cover of $Y$. Indeed, for each $y \in Y$, $p^{-1}\{y\}$ can be covered by finitely many elements $\{G(B_1,D_1), \dots, G(B_n,D_n)\}$ of $\mathscr{G}$. Then, since $\mathscr{B}$ is closed under finite union, by Remark \ref{intersectionpropertyremark},

\[
p^{-1}\{y\} \subset \bigcup_{i=1}^n G(B_i,D_i) \subset G(\bigcup_{i=1}^n B_i, \bigcup_{i=1}^n D_i) \in \mathscr{G}',
\]

so $y \in H(\bigcup_{i=1}^n B_i, \bigcup_{i=1}^n D_i)$. Furthermore, since $G(\bigcup_{i=1}^n B_i, \bigcup_{i=1}^n D_i)$ is an open set containing $p^{-1}\{y\}$, from Theorem 1.4.13 in \cite{Engelking89} we know there is some neighbourhood $U$ of $y$ such that $p^{-1}(U) \subset G(\bigcup_{i=1}^n B_i, \bigcup_{i=1}^n D_i)$; therefore $U \subset H(\bigcup_{i=1}^n B_i, \bigcup_{i=1}^n D_i)$ and so $H(\bigcup_{i=1}^n B_i, \bigcup_{i=1}^n D_i)$ is open.

For each $B,D \in \mathscr{B}$, define 

\[
K_B = ((Y \setminus H(B,T)) \times T) \cup (Y \times \overline{B})
\]

and

\[
L_D = ((Y \setminus H(T,D))\times T) \cup (Y \times \overline{D}).
\]

    Define

    \[
    K = \bigcap_{B \in \mathscr{B}} K_{B}, \ \ \ \  \ \ \ \  L = \bigcap_{D \in \mathscr{B}} L_{D}.
    \]

    Evidently $K$ and $L$ are closed. We show they are disjoint; indeed, note that for $B, D \in \mathscr{B}$, we have

    \[
    K_{B} \cap L_{D} = ((Y \setminus (H(B, T) \cup H(T, D)))\times T) \cup ((Y \setminus H(T,D))\times \overline{B}) \cup ((Y \setminus H(B, T))\times \overline{D}) \cup (Y \times (\overline{B}\cap \overline{D})).
    \]

    It suffices to show that each of the terms in the above union are individually empty when intersected over all pairs $(B, D) \in \mathscr{B}^2$. The second and third terms are both empty when this intersection is taken since $\mathscr{H}'$ covers $Y$, and for any $H(B,D) \in \mathscr{H}'$, we have $H(B,D) \subset H(B,T)$ and $H(B,D) \subset H(T,D)$. Since $T$ is normal and contains at least two distinct points, there are basis elements $B, D \in \mathscr{B}$ such that $\overline{B} \cap \overline{D} = \emptyset$, so the fourth term is empty under this intersection. Thus, we have

    \[
        K \cap L = \bigcap_{(B, D) \in \mathscr{B}^2} K_{B} \cap L_{D} = ((Y \setminus \bigcup_{(B, D) \in \mathscr{B}^2}(H(B,T) \cup H(T, D))) \times T).
    \]

    Since $\mathscr{H}'$ covers $Y$, we have 
    \[\bigcup_{(B, D) \in \mathscr{B}^2} H(B,T) \cup H(T, D) \supset \bigcup_{(B, D) \in \mathscr{B}^2} H(B,T) \cap H(T, D) = \bigcup_{(B, D) \in \mathscr{B}^2} H(B,D) = Y,\]
    
    and therefore $K\cap L = \emptyset$.

    We claim that $\mathscr{H} = \{H(B,D) \ | \  (B,D)\in \mathcal{S}_{\mathscr{B}}\} \subset \mathscr{H}'$ is a $\mathscr{B}$-covering of $Y$. It remains to show that $\mathscr{H}$ covers $Y$. Let $y \in Y$. As shown in Example \ref{coveringexample} $K_y = \{t\in T \ |\  (y,t) \in K\}$ and $L_y = \{t \in T \ |\  (y,t) \in L\}$ are closed disjoint subsets of $T$, so there exist $B,D \in \mathscr{B}$ with $\overline{B} \cap \overline{D} = \emptyset$ such that $K_y \subset B$ and $L_y \subset D$ by the fact that $\mathscr{B}$ is closed under finite unions. We show that $y \in H(B,T)$, an analogous argument shows that $y \in H(T,D)$ and so $y \in H(B,T) \cap H(T,D) = H(B,D)$.

    Define $\mathcal{J} = \{B' \in \mathscr{B} \ |\  y \in H(B', T)\}$, we claim that there is some $B' \in \mathcal{J}$ such that $B' \subset B$, and thus $y \in H(B',T) \subset H(B,T)$. By definition of $K$, we must have $\bigcap_{B' \in \mathcal{J}} \overline{B'} \subset K_y \subset B$. Therefore, $\{T \setminus \overline{B'} \ |\  B' \in \mathcal{J}\}$ is an open cover of $T \setminus B$, so there exists a finite subcover $\{T \setminus \overline{B'_1}, \dots, T\setminus \overline{B'_n}\}$. In particular, we have $\bigcap_{i=1}^n \overline{B'_i} \subset B$. But $y \in H(B'_i,T)$ for each $1 \leq i \leq n$, and therefore 

    \[
    y \in \bigcap_{i=1}^n H(B'_i,T) \subset H (\bigcap_{i=1}^n B'_i, T) 
    \]

    so $B'' = \bigcap_{i=1}^n B'_i \in  \mathcal{J}$. But since $B'' \subset B$, we have

    \[
    y \in H(B'', T) \subset H(B,T), 
    \]

    and so $y \in H(B,T)$. Therefore, for each $y \in Y$, there exist $(B,D) \in \mathcal{S}_{\mathscr{B}}$ such that $y \in H(B,D)$.

    Since $\mathscr{H}$ is a $\mathscr{B}$-covering of $Y$, by Rudin's Lemma, it has a locally finite open refinement $\mathscr{V}$. For each $V \in \mathscr{V}$, there exists $(B_V, D_V) \in \mathcal{S}_{\mathscr{B}}$ such that $V \subset H(B_V, D_V)$, so by definition of $\mathscr{H}'$, we have that for each $V \in \mathscr{V}$,
    
    \[
    p^{-1}(V) \subset p^{-1}(H(B_V,D_V)) \subset G(B_V,D_V).
    \]
    
     Therefore, the set $\{p^{-1}(V) \ |\  V \in \mathscr{V}\}$ is a locally finite open refinement of $\mathscr{G}$. Thus, by Rudin's Lemma, $X \times T$ is normal. 
   
\end{proof}

This allows us to answer another problem in \cite{Kunen84}. 

\begin{corollary}\label{the_corollary}
    Suppose $T$ is compact, and $X \times T$ is normal. Then, $X \times T^n$ is normal for each $n \in \mathbb{N}$.
\end{corollary}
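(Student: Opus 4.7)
The plan is to prove the corollary by induction on $n$, using Theorem \ref{the_theorem} at each step. The base case $n = 1$ is exactly the hypothesis, so I would focus on the inductive step: assuming $X \times T^n$ is normal, show that $X \times T^{n+1} \cong (X \times T^n) \times T$ is normal.

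The key observation is that the projection $\pi \colon X \times T^n \to X$ is a perfect map. It is continuous and surjective; its fibers $\{x\} \times T^n$ are compact; and it is closed because $T^n$ is compact Hausdorff (by Tychonoff's theorem), and projection away from a compact factor is always closed. Writing $X' = X \times T^n$, I then have a perfect map from the normal space $X'$ (normal by the inductive hypothesis) onto $Y := X$, while $Y \times T = X \times T$ is normal by the original hypothesis of the corollary. Applying Theorem \ref{the_theorem} with $X'$ playing the role of $X$, the projection $\pi$ playing the role of the perfect map $p$, and $X$ playing the role of $Y$, I conclude that $X' \times T = X \times T^{n+1}$ is normal, completing the induction.

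I do not anticipate any serious obstacle: the entire content lies in identifying the projection $X \times T^n \to X$ as the perfect map that feeds into Theorem \ref{the_theorem}. The only minor checks are that $T^n$ is compact Hausdorff and that projection along a compact Hausdorff factor is closed, both of which are standard.
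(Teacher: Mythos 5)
Your proof is correct and follows essentially the same route as the paper: induction on $n$, with the key observation that projecting away a compact factor is a perfect map, which feeds into Theorem \ref{the_theorem}. The paper's (one-line) proof cites the projection $X \times T \to X$ and in effect takes $T^n$ as the compact factor, whereas you project $X \times T^n \to X$ and keep $T$ as the compact factor; this is an immaterial reshuffling of the same argument.
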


\begin{proof}
    This follows by induction on $n$, noticing that the projection $p: X \times T \to X$ is perfect because $T$ is compact. 
\end{proof}

Corollary \ref{the_corollary} does not hold when $n$ is replaced with $\omega$. To see why, we need the following lemma.

\begin{lemma}\label{Morita_lemma}
    \cite{Morita61} Let $Q$ and $Q'$ be any two compact Hausdorff spaces. If $Q'$ is either a closed subset or a continuous image of $Q$, then if $X \times Q$ is normal, so is $X \times Q'$.
\end{lemma}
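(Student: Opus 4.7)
The plan is to handle the two cases (closed subspace, continuous image) separately; each reduces the normality of $X \times Q'$ to that of $X \times Q$ via a standard operation that preserves normality.

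For the closed subspace case, the argument is essentially immediate. If $Q'$ is a closed subset of $Q$, then $X \times Q'$ is a closed subset of $X \times Q$, hence inherits normality from the assumed normality of $X \times Q$. No compactness is needed here.

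For the continuous image case, let $f : Q \to Q'$ be a continuous surjection and consider the map $F = \mathrm{id}_X \times f : X \times Q \to X \times Q'$. This is a continuous surjection, so it suffices to show that $F$ is a closed map, since the continuous closed image of a normal space is normal (given disjoint closed $K, L \subset X \times Q'$, apply normality of $X \times Q$ to the disjoint closed sets $F^{-1}(K), F^{-1}(L)$ to obtain disjoint open $U, V$, and take $(X \times Q') \setminus F((X\times Q)\setminus U)$ and $(X\times Q')\setminus F((X\times Q)\setminus V)$ as the separating open sets). So the core of the proof is the closedness of $F$.

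To prove $F$ is closed, I would take a closed set $C \subset X \times Q$ and a point $(x, q') \notin F(C)$, and produce an open neighbourhood of $(x, q')$ disjoint from $F(C)$. The fibre $\{x\} \times f^{-1}(q')$ is a compact subset of $X \times Q$ disjoint from $C$ (since any point $(x, q)$ in that fibre with $q \in C_x$ would give $(x, q') = F(x, q) \in F(C)$), so the tube lemma produces open sets $U \ni x$ in $X$ and $V \supset f^{-1}(q')$ in $Q$ with $(U \times V) \cap C = \emptyset$. Because $Q$ is compact and $Q'$ is Hausdorff, $f$ is a closed map, so $W = Q' \setminus f(Q \setminus V)$ is open in $Q'$, contains $q'$, and satisfies $f^{-1}(W) \subset V$. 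Then $U \times W$ is a neighbourhood of $(x, q')$ disjoint from $F(C)$, so $F$ is closed, completing the argument.

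The main obstacle is the closedness of $F = \mathrm{id}_X \times f$: a product of a closed map with the identity is not closed in general, so one must genuinely use the compactness of the fibres of $f$ (via the tube lemma) together with the closedness of $f$. Everything else, namely the closed-subspace case and the fact that closed continuous surjections preserve normality, is routine.
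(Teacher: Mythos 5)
Your proof is correct. Note that the paper does not actually prove this lemma --- it is quoted from Morita's paper as a known result --- so there is no in-text argument to compare against; your write-up supplies the standard proof one would expect. Both halves check out: the closed-subspace case is immediate, and in the continuous-image case your tube-lemma argument correctly establishes that $\mathrm{id}_X \times f$ is closed (equivalently, that $f$ is perfect because $Q$ is compact and $Q'$ is Hausdorff, and perfect maps are preserved under products with the identity), after which normality passes to the closed continuous image exactly as you describe. The only point worth adding is a triviality forced by the paper's convention that ``normal'' includes $T_2$: you should observe that $X \times Q'$ is Hausdorff, which holds because $X$ is Hausdorff (being a factor of the normal space $X \times Q$) and $Q'$ is Hausdorff by hypothesis; Hausdorffness is not automatically inherited by closed continuous images, so it must come from the product structure rather than from the map $F$.
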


\begin{proposition}
    There exists a space $X$ and a compact set $T$ such that $X \times T$ is normal but $X \times T^{\omega}$ is not normal. 
\end{proposition}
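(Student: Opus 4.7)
The plan is to exhibit an explicit pair $(X,T)$. I would take $T = \{0,1\}$ with the discrete topology, which is certainly compact Hausdorff, and let $X$ be any Dowker space, that is, a normal space for which $X \times [0,1]$ fails to be normal. The existence of such a space in ZFC is a well-known (and highly nontrivial) result of M.~E.~Rudin, which I would invoke as a black box.

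With these choices, $X \times T$ is homeomorphic to the topological sum $X \sqcup X$, and is therefore normal whenever $X$ is. On the other hand, $T^{\omega} = 2^{\omega}$ is the Cantor set, and the standard binary expansion map $(x_n) \mapsto \sum_{n \geq 1} x_n 2^{-n}$ is a continuous surjection of $2^{\omega}$ onto $[0,1]$. Hence $[0,1]$ is a continuous image of $T^{\omega}$.

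If $X \times T^{\omega}$ were normal, then Lemma \ref{Morita_lemma}, applied with $Q = T^{\omega}$ and $Q' = [0,1]$, would force $X \times [0,1]$ to be normal as well, contradicting the Dowker property of $X$. This gives the desired conclusion. The only genuine difficulty in the argument is the existence of a Dowker space, which I expect to be the one point that must be quoted rather than proved; once that is granted, the rest reduces to the trivial observations that a two-point product is a disjoint sum and that the Cantor cube maps continuously onto the interval, followed by a single application of Morita's lemma.
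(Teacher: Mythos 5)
Your proposal is correct and is essentially the paper's own argument: take $T=2$ discrete and $X$ a Dowker space, observe $X\times T$ is normal, and use the continuous surjection of $2^{\omega}$ onto $[0,1]$ together with Lemma \ref{Morita_lemma} to derive a contradiction with Dowkerness. No differences worth noting.
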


\begin{proof}
    Let $T = 2$ with the discrete topology, and let $X$ be a Dowker space, that is, a normal space for which $X \times [0,1]$ is not normal \cite{Rudin71}. Since $T$ is discrete and $X$ is normal, we know that $X \times T$ is normal. Suppose that $X \times 2^{\omega}$ is normal. Since $2^{\omega}$ is the Cantor set, $[0,1]$ is the continuous image of $2^{\omega}$ (see \cite{Engelking89}), so by Lemma \ref{Morita_lemma}, $X \times [0,1]$ is normal, contradicting the fact that $X$ is Dowker.  
\end{proof}

\section*{Acknowledgements}

The author would like to thank Clovis Hamel and Franklin D. Tall for their invaluable comments, corrections, and references. 

\begin{bibdiv}
\begin{biblist}

\bib{Beslagic91}{article}{
title = {The normality of products with one compact factor, revisited},
author= {Be\v{s}lagi\'{c}, Amer},
journal = {Topology and its Applications},
volume = {52},
date ={1993},
pages = {121 - 126},
}

\bib{Engelking89}{book}{
title = {General Topology},
subtitle = {revised and completed edition},
author = {Ryszard Engelking},
publisher = {Heldermann Verlag},
date = {1989},
volume = {6},
address = {Berlin},
}

\bib{Morita61}{article}{
title = {Note on Paracompactness},
author = {Morita, Kiiti},
journal = {Proceedings of the Japan Academy},
date={1961},
volume = {37},
pages={1 - 3},
}

\bib{Morita61b}{article}{
title = {Paracompactness and Product Spaces},
author = {Morita, Kiiti},
journal = {Fundamenta Mathematicae},
date={1961},
volume = {53},
pages={223 - 236},
}

\bib{Reed80}{inbook}{
booktitle = {Surveys in General Topology},
title = {Product Spaces},
author = {Przymusi\'{n}ski, Teodor},
editor = {Reed, G.M.},
publisher = {Academic Press},
address = {New York},
date = {(1980)},
pages = {399 - 429},
}

\bib{Kunen84}{inbook}{
booktitle={Handbook of Set-Theoretic Topology},
title={Products of Normal Spaces},
author={Przymusi\'{n}ski, Teodor},
editor={Kunen, Kenneth},
publisher={Elsevier Science Publishers},
address={Amsterdam},
date={(1984)},
pages = {781 - 826},
}

\bib{Rudin71}{article}{
title = {A normal space $X$ such that $X \times I$ is not normal},
author = {Rudin, Mary Ellen},
journal = {Fundamenta Mathematicae},
date = {1971},
volume = {63},
pages = {179 - 186},
}

\bib{Rudin75}{book}{
title={Lectures on Set Theoretic Topology},
author={Rudin, Mary Ellen},
publisher={American Mathematical Society},
address={Providence, RI},
date={1975}
}

\bib{Rudin73}{article}{
title={The Normality of Products with One Compact Factor},
author={Rudin, Mary Ellen},
journal={General Topology and its Applications},
volume={5},
date={1975},
pages={49 - 59}
}

\end{biblist}
\end{bibdiv}

\end{document}